\theoremstyle{plain}
\newtheorem{thm}{Theorem}
\newtheorem{lemm}{Lemma}
\DeclareMathOperator{\N}{{\mathbb{N}}}
\DeclareMathOperator{\CCat}{\mathcal{C}}
\DeclareMathOperator{\MCat}{\mathcal{M}}
\DeclareMathOperator{\EOp}{\mathsf{E}}
\DeclareMathOperator{\POp}{\mathsf{P}}
\DeclareMathOperator{\QOp}{\mathsf{Q}}
\DeclareMathOperator{\ROp}{\mathsf{R}}
\DeclareMathOperator{\SOp}{\mathsf{S}}
\DeclareMathOperator{\MOp}{\mathsf{M}}
\DeclareMathOperator{\NOp}{\mathsf{N}}
\DeclareMathOperator{\FOp}{\mathcal{F}}
\DeclareMathOperator{\Seq}{\mathcal{S}\mathit{eq}}
\DeclareMathOperator{\Op}{\mathcal{O}\mathit{p}}
\DeclareMathOperator{\Map}{\mathtt{Map}}
\DeclareMathOperator{\Mor}{\mathtt{Mor}}
\DeclareMathOperator{\End}{\mathtt{End}}
\DeclareMathOperator{\Ob}{\mathtt{Ob}}
\DeclareMathOperator{\pt}{\mathit{pt}}
\DeclareMathOperator{\Diag}{\mathrm{Diag}}
\DeclareMathOperator{\id}{\mathit{id}}
\DeclareMathOperator{\DGB}{\mathtt{B}}
\DeclareMathOperator{\DGL}{\mathbb{L}}
\begin{document}

\title{The homotopy theory of operad subcategories}

\author{Benoit Fresse}
\address{Universit\'e de Lille\\
Laboratoire Painlev\'e\\
Cit\' e Scientifique - B\^ atiment M2\\
F-59655 Villeneuve d'Ascq Cedex, France}
\email{Benoit.Fresse@math.univ-lille1.fr}

\author{Victor Turchin}
\address{Department of Mathematics\\
Kansas State University\\
138 Cardwell Hall\\
Manhattan, KS 66506, USA}
\email{turchin@ksu.edu}

\author{Thomas Willwacher}
\address{Department of Mathematics \\ ETH Zurich \\
R\"amistrasse 101 \\
8092 Zurich, Switzerland}
\email{thomas.willwacher@math.ethz.ch}

\thanks{B.F. acknowledges support by Labex ANR-11-LABX-0007-01 \lq\lq{}CEMPI\rq\rq{}.
V.T. is partially supported by the Simons Foundation travel grant, award ID: 519474.
T.W. has been partially supported by the Swiss National Science foundation, grant 200021-150012, the NCCR SwissMAP funded by the Swiss National Science foundation, and the ERC starting grant GRAPHCPX. The authors are grateful to the referee for her/his thorough reading of the paper}

\date{March 17, 2017 (revised on December 11, 2017 and February 5, 2018)}

\begin{abstract}
We study the subcategory of topological operads $\POp$ such that $\POp(0) = *$ (the category of unitary operads in our terminology).
We use that this category inherits a model structure, like the category of all operads in topological spaces,
and that the embedding functor of this subcategory of unitary operads into the category of all operads
admits a left Quillen adjoint.
We prove that the derived functor of this left Quillen adjoint functor induces a left inverse of the derived functor of our category embedding at the homotopy category level.
We deduce from this result that the derived mapping spaces associated to our model category of unitary operads
are homotopy equivalent to the standard derived operad mapping spaces,
which we form in the model category of all operads
in topological spaces.
We prove that analogous statements hold for the subcategory of $k$-truncated unitary operads within the model category of all $k$-truncated operads,
for any fixed arity bound $k\geq 1$, where a $k$-truncated operad denotes an operad
that is defined up to arity~$k$.
\end{abstract}

\maketitle

\section*{Introduction}

Let $\Op$ be the category of all operads in topological spaces. Throughout this paper, we use the terminology `\emph{unitary operad}',
borrowed from the book~\cite{FresseBook},
to refer to the category of operads $\POp$ satisfying $\POp(0) = *$,
and we adopt the notation $\Op_*\subset\Op$
for this subcategory of operads.
Recall that this category $\Op_*$ is isomorphic to the category of $\Lambda$-operads $\Lambda\Op_{\varnothing}$,
whose objects are operads satisfying $\POp(0) = \varnothing$, but which we equip with restriction operators $u^*: \POp(n)\rightarrow\POp(m)$,
associated to the injective maps $u: \{1<\dots<m\}\rightarrow\{1<\dots<n\}$,
and which model composition operations with an arity zero operation
at the inputs $j\not\in\{u(1),\dots,u(m)\}$ (see~\cite[\S I.3]{FresseBook}).

Let $\iota: \Op_*\hookrightarrow\Op$ be the obvious category embedding. This functor has a left adjoint $\tau: \Op\rightarrow\Op_*$,
which we call the \emph{unitarization}
in what follows.
In short, for an operad $\POp\in\Op$, the operad $\tau\POp$ is defined by collapsing $\POp(0)$ to a one-point set $\tau\POp(0) = *$,
and by taking the quotient of the spaces $\POp(r)$ under appropriate equivalence relations
in order to make all composites $p\circ_i e$ of a given element $p\in\POp(r+1)$ with an arity zero element $e\in\POp(0)$
equivalent to the same point $p\circ_i *$
in the operad $\tau\POp(r)$.

We use that both categories $\Op_*$ and $\Op$ can be equipped with a model structure in order to do homotopy theory.
We can use the model structures defined in~\cite{BergerMoerdijk} for the category $\Op$.
We then assume that the weak-equivalences and the fibrations of operads are created arity-wise in the base category of spaces.
We refer to this model structure on our category of operads $\Op$ as the projective model structure.
We can use the same construction to get a model structure on the category of unitary operads $\Op_*$ (see also~\cite{BergerMoerdijk},
where the terminology `reduced operad' is used for our category of `unitary operads').
We can also consider the Reedy model structure of~\cite[\S II.8.4]{FresseBook}, which is defined for the category of $\Lambda$-operads $\Lambda\Op_{\varnothing}$,
but which we can merely transport to the category of unitary operads $\Op_*$
by using the isomorphism of categories $\Lambda\Op_{\varnothing}\simeq\Op_*$. Both choices are equivalent for our purpose, and we can equip $\Op_*$ with the projective model structure
or with the Reedy model structure without any change in our arguments. Recall simply that the Reedy model structure
has less fibrations than the projective structure, but the identity functor gives a Quillen equivalence
between these model structures on $\Op_*$ (see again~\cite[\S II.8.4]{FresseBook}).

Both categories $\Op_*$ and $\Op$ are used in homotopy computations. To be specific, the authors use the model category $\Op_*$
to compute mapping spaces of $E_n$-operads in \cite{FresseTurchinWillwacher},
whereas the model category $\Op$ is used in the Goodwillie-Weiss calculus,
in the expression of the relationship between the mapping spaces
of $E_n$-operads
and the embedding spaces of Euclidean spaces with compact support (see for instance~\cite{BoavidaWeiss,DwyerHess,Turchin}).
The approach of \cite{FresseTurchinWillwacher} is to use an operadic enhancement of the Sullivan model,
which can be handled in the category $\Op_*$ (see~\cite[\S II.12]{FresseBook}),
in order to perform rational homotopy computations
in the category of operads.

The main purpose of this paper is to establish the following comparison statement, where we consider the derived mapping spaces $\Map^h_{\CCat}(-,-)$
associated to our model categories of operads $\CCat = \Op,\Op_*$:

\begin{thm}\label{thm:MainResult}
The functor $\iota: \Op_*\hookrightarrow\Op$ induces a weak-equivalence on derived mapping spaces:
\begin{equation*}
\Map_{\Op_*}^h(\POp,\QOp)\sim\Map_{\Op}^h(\iota\POp,\iota\QOp),
\end{equation*}
for all operads $\POp,\QOp\in\Op_*$.
\end{thm}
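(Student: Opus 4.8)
The plan is to derive this statement formally from the left-inverse property of the derived unitarization functor, which is established beforehand, together with the compatibility of derived mapping spaces with Quillen adjunctions. Recall first that $(\tau,\iota)$ is a Quillen adjunction with $\tau$ left Quillen: since $\iota$ is the inclusion and fibrations and weak-equivalences are created arity-wise in spaces in both $\Op$ and $\Op_*$, the functor $\iota$ preserves (trivial) fibrations, hence is right Quillen. In particular $\iota$ preserves all weak-equivalences, so its total right derived functor is represented by $\iota$ itself; and since $\iota$ is fully faithful, the counit $\tau\iota\to\id_{\Op_*}$ is an isomorphism. To make sense of $\Map^h$ in the topological, a priori non-simplicial, setting I would work with the homotopy function complexes attached to cosimplicial/simplicial framings in the sense of Dwyer--Kan and Hirschhorn; these are homotopy invariant in each variable, and a Quillen adjunction induces an adjunction between them up to coherent homotopy.

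Concretely, write $Q\iota\POp\xrightarrow{\sim}\iota\POp$ for a cofibrant replacement in $\Op$, so that $\mathbb{L}\tau\,\iota\POp=\tau(Q\iota\POp)$, and let
\begin{equation*}
c\colon \mathbb{L}\tau\,\iota\POp\longrightarrow\tau\iota\POp\cong\POp
\end{equation*}
be the resulting natural comparison map in $\Op_*$. The Quillen adjunction $\tau\dashv\iota$ furnishes a natural weak-equivalence of derived mapping spaces
\begin{equation*}
\alpha\colon\Map_{\Op}^h(\iota\POp,\iota\QOp)\;\xrightarrow{\ \sim\ }\;\Map_{\Op_*}^h(\mathbb{L}\tau\,\iota\POp,\QOp).
\end{equation*}
The key bookkeeping step is to identify the composite $\alpha\circ\iota_*$, where $\iota_*\colon\Map_{\Op_*}^h(\POp,\QOp)\to\Map_{\Op}^h(\iota\POp,\iota\QOp)$ is the comparison map of the theorem induced by the embedding. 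By naturality of the adjunction equivalence and the triangle identities, this composite coincides with the map
\begin{equation*}
c^*\colon\Map_{\Op_*}^h(\POp,\QOp)\longrightarrow\Map_{\Op_*}^h(\mathbb{L}\tau\,\iota\POp,\QOp)
\end{equation*}
given by restriction along $c$. Indeed, at the level of ordinary hom-sets the adjunct of $\iota h$ is $\epsilon_\QOp\circ\tau\iota h=h$ since the counit is the identity; deriving this identity (i.e.\ inserting the cofibrant replacement $Q\iota\POp$) replaces the identity by precomposition with $c$, which is exactly $c^*$.

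It then remains to invoke the left-inverse result, which asserts precisely that the comparison map $c$ is a weak-equivalence for every $\POp\in\Op_*$. Homotopy invariance of derived mapping spaces in the source variable shows that $c^*$ is then a weak-equivalence, and since $\alpha$ is a weak-equivalence by construction, the two-out-of-three property applied to the factorization $\alpha\circ\iota_*=c^*$ forces $\iota_*$ itself to be a weak-equivalence, which is the assertion of the theorem. I expect the only delicate points of this deduction to be, first, setting up the derived mapping-space adjunction and the identity $\alpha\circ\iota_*=c^*$ rigorously in the topological setting without a simplicial enrichment---handled via the framing machinery---and, second, the reliance on the substantive input that $c$ is a weak-equivalence, whose proof (carried out earlier) is where the real work resides: one must show that unitarizing a cofibrant operadic resolution of $\iota\POp$ does not alter its homotopy type, i.e.\ that collapsing the arity-zero component is homotopically harmless on cofibrant models.
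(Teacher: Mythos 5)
Your proposal is correct and follows essentially the same route as the paper: both reduce the statement to the Quillen adjunction $\tau\dashv\iota$, the identification of the induced map on derived mapping spaces with restriction along the derived counit $\DGL\tau(\iota\POp)\rightarrow\POp$, and the fact that this counit is a weak-equivalence (the paper's Theorem~\ref{thm:Goal}), concluding by two-out-of-three. The substantive content---showing that unitarizing the cofibrant resolution $W(\EOp\times\POp)$ does not change the homotopy type, which requires a modified (height-based) contracting homotopy on the $W$-construction---resides in that prior result, which you correctly identify as the real work but do not reprove; this matches the paper's own structure, where Theorem~\ref{thm:MainResult} is deduced as a corollary of Theorem~\ref{thm:Goal}.
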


The derived mapping spaces of this theorem can be defined as usual, by taking the ordinary mapping spaces associated to a cofibrant resolution of our source object
and a fibrant resolution of our target object in our model categories.
For instance, we have $\Map_{\Op_*}^h(\POp,\QOp) := \Map_{\Op_*}(\ROp,\SOp)$, where $\ROp\xrightarrow{\sim}\POp$
is a cofibrant resolution of the operad $\POp$ in $\Op_*$,
whereas $\QOp\xrightarrow{\sim}\SOp$ is a fibrant resolution of the operad $\QOp$. We proceed similarly in the case
of the mapping space $\Map_{\Op}^h(\iota\POp,\iota\QOp)$.
But we do not really need to make these mapping space constructions more explicit, because we deduce our statement
from another approach which involves the left adjoint $\tau: \Op\rightarrow\Op_*$
of our category embedding $\iota: \Op_*\hookrightarrow\Op$.

In brief, we readily see that these functors define a Quillen adjunction $\tau: \Op\rightleftarrows\Op_* :\iota$ whatever choice we make for the model
structure on $\Op_*$ (the projective model structure or the Reedy model structure).
This Quillen adjunction relation implies that we have a weak-equivalence
at the derived mapping space level
\begin{equation*}
\Map_{\Op}^h(\iota\POp,\iota\QOp)\sim\Map_{\Op_*}^h(\DGL\tau(\iota\POp),\QOp),
\end{equation*}
where $\DGL\tau$ denotes the left derived functor of the left Quillen adjoint $\tau: \Op\rightarrow\Op_*$.
Then we can reduce the proof of Theorem~\ref{thm:MainResult} to the verification
that we have the relation $\DGL\tau(\iota\POp)\xrightarrow{\sim}\POp$
when we consider the augmentation morphism
of the derived adjunction relation
associated to our functors.
We have by definition $\DGL\tau(\iota\POp) := \tau\ROp$, where $\ROp\xrightarrow{\sim}\iota\POp$ is any cofibrant resolution of the object $\iota\POp$
in the model category $\Op$.
We are therefore left to verifying that we get a weak-equivalence $\tau\ROp\xrightarrow{\sim}\POp$,
for a good choice of the resolution $\ROp\xrightarrow{\sim}\iota\POp$,
when we pass to the category of unitary operads $\Op_*$.
In what follows, we generally omit to mark the functor $\iota: \POp\mapsto\iota\POp$ in our formulas.

We can assume that $\POp$ is the geometric realization of an operad in simplicial sets, which we abusively denote by the same letter $\POp$,
because the model category of operads in simplicial sets is Quillen equivalent to the model category of topological operads
(see for instance \cite[\S II.1.4]{FresseBook}).
We consider an arity-wise cartesian product $\EOp\times\POp$, where $\EOp$ is an $\EOp_{\infty}$-operad in simplicial sets,
to get an operad in simplicial sets equipped with a free action of the symmetric group
and such that $\EOp\times\POp\xrightarrow{\sim}\POp$. (We make our choice of this operad $\EOp$ explicit later on.)
We take $\ROp = W(\EOp\times\POp)$, the Boardman-Vogt construction on this operad $\EOp\times\POp$, as a cofibrant resolution of the object $\POp$
in the model category of operads $\Op$.
We actually check that we have the following statement:

\begin{thm}\label{thm:Goal}
We have $\tau W(\EOp\times\POp)\xrightarrow{\sim}\EOp\times\POp\xrightarrow{\sim}\POp$, for any unitary operad $\POp\in\Op_*$.
\end{thm}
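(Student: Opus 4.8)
The plan is to treat the two weak-equivalences of the statement separately, the second one being elementary and the first one carrying the whole difficulty. The morphism $\EOp\times\POp\xrightarrow{\sim}\POp$ is simply the projection onto the second factor. This projection is a morphism of unitary operads, because $(\EOp\times\POp)(0)=\EOp(0)\times\POp(0)=*=\POp(0)$, and arity-wise it reduces to the map $\EOp(r)\times\POp(r)\to\POp(r)$; since each space $\EOp(r)$ is contractible (as $\EOp$ is an $E_\infty$-operad), this is a weak-equivalence in every arity. First I would record this observation, and then concentrate all the work on the morphism $\tau W(\EOp\times\POp)\xrightarrow{\sim}\EOp\times\POp$.

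For the first weak-equivalence, write $\QOp=\EOp\times\POp$ for brevity. The map in question is the image under $\tau$ of the augmentation $\epsilon\colon W(\QOp)\to\QOp$ of the Boardman–Vogt construction, composed with the identification $\tau\QOp=\QOp$ that holds because $\QOp$ is already unitary. Note that we cannot conclude directly from the Quillen adjunction that $\tau(\epsilon)$ is a weak-equivalence: the functor $\tau$ preserves weak-equivalences between cofibrant objects only, whereas $\QOp$ is not cofibrant in $\Op$ in general, so we are forced to analyse the homotopy type of $\tau W(\QOp)$ by hand. The starting point is the tree description of $W(\QOp)$, whose elements are trees with $r$ leaves, vertices decorated by the spaces $\QOp(-)$ and internal edges weighted by lengths in $[0,1]$, with the convention that a length-zero edge is contracted, its two adjacent operations being composed. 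I would then make explicit how $\tau$ acts on this model: since $\tau$ identifies $p\circ_i e$ with $p\circ_i *$ for every arity-zero element $e$, and the arity-zero part $W(\QOp)(0)$ consists precisely of the \emph{closed subtrees} (those all of whose leaves are stumps), passing to $\tau W(\QOp)$ amounts to collapsing every maximal closed subtree, together with its internal lengths and decorations, to a trivial cap — equivalently, to recording only which inputs of the surviving vertices are capped and applying the corresponding $\Lambda$-restriction operators.

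The key step is then to descend the standard deformation retraction of the $W$-construction to this quotient. The augmentation $\epsilon$ admits the homotopy $H_t$ that rescales all edge lengths by the factor $t$; at $t=1$ it is the identity and at $t=0$ it contracts every internal edge, so that $H_t$ strongly deformation retracts $W(\QOp)$ onto the subspace of fully composed trees, which is a copy of $\QOp$. Because rescaling lengths is compatible with the operations of capping an input and collapsing a closed subtree, the homotopy $H_t$ passes to the quotient $\tau W(\QOp)$, and at $t=0$ it lands in the fully composed, trivially capped subspace, which is exactly $\tau\QOp=\QOp$. Granting that this descended homotopy is still a strong deformation retraction, we obtain $\tau W(\QOp)\xrightarrow{\sim}\QOp$ and conclude.

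The hard part will be to justify that the collapse defining $\tau$ is homotopically harmless, that is, that the descended homotopy is a genuine deformation retraction rather than merely a continuous self-map of a possibly ill-behaved quotient. This is exactly the point at which the cartesian factor $\EOp$ is indispensable. Working in simplicial sets, where every object is cofibrant, the substance of the argument is the freeness of the symmetric group action on $\QOp=\EOp\times\POp$: it makes $\QOp$ $\Sigma$-cofibrant, so that $W(\QOp)$ is built cellularly by pushouts along cofibrations and is genuinely cofibrant, and this cellular structure is inherited, compatibly with $H_t$, by $\tau W(\QOp)$. More to the point, the closed subtrees that get collapsed carry free symmetric group actions, so that the quotients involved in the unitarization are homotopy quotients; each elementary collapse is then the collapse of a contractible space (the contractibility of $W(\QOp)(0)$ follows from the same rescaling homotopy, which retracts it onto $\QOp(0)=*$) along a cofibration, hence a weak-equivalence. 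I would organise this verification through a filtration of $\tau W(\QOp)$ by the number of capped inputs, checking that each successive layer is obtained from the previous one by a pushout along such a weak-equivalence, so that the deformation retraction is compatible with the filtration and remains a weak-equivalence. Controlling this filtration, and in particular ruling out the homotopical degeneracies that a non-free action would create, is where the main effort of the proof lies.
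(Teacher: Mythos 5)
Your reduction of the statement to the single map $\tau W(\EOp\times\POp)\to\EOp\times\POp$ and your tree-level description of what $\tau$ does to the $W$-construction match the paper. But the central step of your argument contains a genuine gap, and it is precisely the point the paper singles out as the difficulty. You claim that the length-rescaling homotopy $H_t$ ``passes to the quotient $\tau W(\QOp)$'' because rescaling is compatible with collapsing a closed (fully capped) subtree. It is not: the extra reduction relation in $\tau W$ (Eq.~\ref{eq:reduction relation}) only identifies a fully capped subtree with a trivial cap when the outgoing edge of that subtree has length exactly $l_0=1$. After rescaling by $t<1$ that edge has length $t<1$, so the image of such an element is no longer subject to the relation, while the image of its identified partner (the trivial cap) is a different point. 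Two identified points are sent to non-identified points, so $H_t$ is not well defined on $\tau W(\QOp)$. The paper states this failure explicitly (\S\ref{subsec:homotopy remark}) and circumvents it by reparametrizing elements of $\tau W'$ by vertex \emph{heights} $h_x$ rather than edge lengths, and contracting via $h_x^t=\min(h_x,t)$; this truncation from the root is what is compatible with the reduction relation, and it is the actual content of the proof of Lemma~\ref{lemm:W horizontal equivalence}. Your fallback filtration by number of capped inputs does not repair this, since it still presupposes a descended homotopy.

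A second, related omission: you attribute the role of the factor $\EOp$ entirely to $\Sigma$-freeness (cofibrancy of $W(\EOp\times\POp)$). That is only half of its purpose. The paper uses the specific \emph{extended} Barratt-Eccles operad, with the arity-zero generator $e$ and the idempotence relation, so that dropping the operadic unit still leaves a suboperad without unit (Lemma~\ref{lemm:nonunital operad structure}); this yields the identification $W((\EOp\times\POp)_n)=W'((\overline{\EOp\times\POp})_n)$ with the non-unital $W$-construction (Lemma~\ref{lemm:nonunital W}), which is what makes the height functions well defined (the unit reduction relation of Eq.~\ref{eq:unit contraction} would otherwise break them). Also note that the freeness of the $\Sigma$-action does not by itself make the collapse of the capped subtrees homotopically harmless, since that collapse is the quotient of a subspace to a point, not a quotient by the group action; what is actually needed is the explicit contracting homotopy on the quotient, which is what the height construction provides.
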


Thus, if we recap our arguments, then we can reduce the proof of Theorem~\ref{thm:MainResult} to the verification of this claim,
and we devote the next section to this objective.
In short, we mainly prove that the operad morphism $\tau W(\EOp\times\POp)\rightarrow\EOp\times\POp$
defines a homotopy equivalence of simplicial sets arity-wise.
We have a classical contracting homotopy on the $W$-construction (see for instance~\cite[\S III.1]{BoardmanVogt}).
We can use this homotopy to check that the object $\ROp = W(\EOp\times\POp)$ is equivalent to $\EOp\times\POp\sim\POp$
in the homotopy category of operads,
but this homotopy does not pass to the quotient object $\tau W(-)$ (see~\S\ref{subsec:homotopy remark}).
We introduce another contracting homotopy in order to work out this problem and to prove our theorem.
The consideration of the cartesian product $\EOp\times\POp$, where $\EOp$ is an $\EOp_{\infty}$-operad in simplicial sets,
enables us to ensure, at first, that the object $\ROp = W(\EOp\times\POp)$
is cofibrant in the projective model category of operads.
But, actually, we use a particular choice of the operad $\EOp$ in order to get a well-defined contracting homotopy of simplicial sets
on the operad $\tau W(\EOp\times\POp)$.
We give more explanations on this technical point in the course or our verifications.

To complete our results, we establish an analogue of our main theorems for the categories of $k$-truncated operads, considered in the paper \cite{FresseTurchinWillwacher}.
The category of $k$-truncated operads, where we fix $k\geq 1$, explicitly consists of the operads $\POp$ that are defined up to arity $k$.
We check that our argument lines can be adapted to cover this case.
We devote a second section to this survey. We use mapping spaces of $k$-truncated operads
for the study of the Goodwillie-Weiss tower of embedding spaces
in \cite{FresseTurchinWillwacher}, and the $k$-truncated refinement of our comparison result
is involved in such applications.

\section{Proof of the main statements}
The goal of this section is to prove Theorem~\ref{thm:Goal}, and as a follow-up Theorem~\ref{thm:MainResult},
as we just explained in the paper introduction.
We make explicit our choice of the $E_{\infty}$-operad $\EOp$ first.
We review the definition of the $W$-construction afterwards and we eventually give this proof of Theorem~\ref{thm:Goal}.

\subsection{The extended Barratt-Eccles operad}\label{subsec:BarrattEccles}
We use the notation $\Sigma_r$ for the symmetric group in $r$ letters all along this paper, for any $r\in\N$.

The operad $\EOp$ which we consider in our construction is a simple extension of the classical Barratt-Eccles operad~\cite{BarrattEccles}.
We define this operad as the classifying space $\EOp = \DGB(\MCat)$
of a certain operad in the category of categories $\MCat$.
We first take:
\begin{equation}
\Ob\MCat = \FOp(\mu(x_1,x_2),\mu(x_2,x_1),e)/\langle\mu\circ_1\mu\equiv\mu\circ_2\mu,\mu(e,e)\equiv e\rangle,
\end{equation}
the operad in sets generated by a non-symmetric operation $\mu = \mu(x_1,x_2)$ in arity $2$,
and an operation $e$ in arity $0$,
together with the associativity relation $\mu(\mu(x_1,x_2),x_3)\equiv\mu(x_1,\mu(x_2,x_3))$
and the idempotence relation $\mu(e,e)\equiv e$
as generating relations.
We then set:
\begin{equation}
\Mor_{\MCat(r)}(p,q) = \pt,
\end{equation}
for any pair of elements $p,q\in\Ob\MCat(r)$. We define the composition operations of this operad $\circ_i: \MCat(k)\times\MCat(l)\rightarrow\MCat(k+l-1)$
by the natural composition operations of the operad $\Ob\MCat$
at the object set level, and by the obvious identity maps
at the morphism set level. (We give brief explanations on an interpretation of this operad in a remark at the end of the paper.)

If we use classical algebraic notation for the product operation $x_1 x_2 = \mu(x_1,x_2)$,
then we can identify the elements of $\Ob\MCat(r)$ with monomials of the form:
\begin{equation}
p(x_1,\dots,x_r) = e^{\epsilon_0} x_{\sigma(1)} e^{\epsilon_1} \cdots e^{\epsilon_{r-1}} x_{\sigma(r)} e^{\epsilon_r},
\end{equation}
where $\epsilon_0,\dots,\epsilon_r\in\{0,1\}$, and $\sigma\in\Sigma_r$.
The operadic composition operations are given, at the object-set level, by the standard substitution operation
of monomials together with the Boolean multiplication rules:
\begin{equation}
e^{\alpha} e^{\beta} = \begin{cases} e^0, & \text{if $\alpha = \beta = 0$}, \\
e^1, & \text{otherwise}.
\end{cases}
\end{equation}

The constant maps $\MCat(r)\rightarrow\pt$ trivially define equivalences of categories in all arities $r\in\N$,
and as a consequence, these maps induce a weak-equivalence
of operads in simplicial sets when we pass to classifying spaces:
\begin{equation}
\EOp = \DGB(\MCat)\xrightarrow{\sim}\pt.
\end{equation}

Let $\overline{\EOp}_n = \overline{\DGB(\MCat)}_n$ denote the collection of sets, where $n\in\N$ is a fixed simplicial dimension,
which we form by dropping (the degeneracies of) the vertex $1\in\Ob\MCat(1)$ (the operadic unit)
from $\overline{\DGB(\MCat)}_n$.
The sets $\overline{\DGB(\MCat)}(1)_n$, $n\in\N$, are not preserved by the face operators
of the classifying space $\DGB(\MCat)(1)$.
For instance, if we take the morphism $x_1 e\rightarrow x_1$, which represents a one simplex in $\DGB(\MCat)(1)$,
then we have $d_0(x_1 e\rightarrow x_1) = x_1 = 1$.
But, on the other hand, we have the following lemma:

\begin{lemm}\label{lemm:nonunital operad structure}
The collections $\overline{\EOp}_n = \overline{\DGB(\MCat)}_n$, $n\in\N$, are preserved by the operadic composition operations
of the operads in sets $\DGB(\MCat)_n$, and hence, form operads without unit.
\end{lemm}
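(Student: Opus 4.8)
The plan is to reduce the closure property to a computation with the monomials comprising $\Ob\MCat$, exploiting that $\MCat(r)$ is an indiscrete groupoid in each arity $r\in\N$ (a unique morphism between any two objects). First I would record that, under this assumption, an $n$-simplex of the classifying space $\DGB(\MCat)(r)$ is the same thing as an $(n+1)$-tuple of objects $(p_0,\dots,p_n)$, with $p_a\in\Ob\MCat(r)$, and that the operadic composition $\circ_i$ of the operad in sets $\DGB(\MCat)_n$ is performed coordinate-wise, namely $(p_0,\dots,p_n)\circ_i(q_0,\dots,q_n) = (p_0\circ_i q_0,\dots,p_n\circ_i q_n)$, where each $p_a\circ_i q_a$ is the substitution operation of monomials in $\Ob\MCat$. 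This holds because the nerve preserves products and sends the composition functors of $\MCat$ to their levelwise application. Under this identification, the collection $\overline{\DGB(\MCat)}_n$ differs from $\DGB(\MCat)_n$ only in arity $1$, where it omits the single totally degenerate simplex $(1,\dots,1)$ on the operadic unit $1\in\Ob\MCat(1)$, while $\overline{\DGB(\MCat)}(r)_n=\DGB(\MCat)(r)_n$ for all $r\neq 1$.

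I would then note that a composite $p\circ_i q$, with $p\in\overline{\DGB(\MCat)}(k)_n$ and $q\in\overline{\DGB(\MCat)}(l)_n$, can leave $\overline{\DGB(\MCat)}_n$ only by equalling the forbidden tuple $(1,\dots,1)$ in arity $1$; this forces the target arity $k+l-1$ to equal $1$, hence $k+l=2$. Since $\circ_i$ presupposes $1\leq i\leq k$, only the two cases $(k,l)=(1,1)$ and $(k,l)=(2,0)$ survive, and in every other arity there is nothing to check because no element has been removed. The argument thus comes down to these two cases, which I would settle at the level of a single coordinate.

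For $(k,l)=(1,1)$ I would use the explicit description $e^{\alpha}x_1 e^{\beta}$ of the objects of $\Ob\MCat(1)$: substituting $e^{b_0}x_1 e^{b_1}$ for the variable of $e^{a_0}x_1 e^{a_1}$ yields $e^{a_0\vee b_0}x_1 e^{a_1\vee b_1}$ by the Boolean multiplication rule, so that $p_a\circ_1 q_a=1$ holds exactly when $p_a=1$ and $q_a=1$. Because $p\neq(1,\dots,1)$ there is a coordinate with $p_a\neq 1$, and the corresponding coordinate of $p\circ_1 q$ is then $\neq 1$, so the composite is not the forbidden tuple. For $(k,l)=(2,0)$ the decisive point is that the idempotence relation $\mu(e,e)\equiv e$ collapses the arity-zero part of $\Ob\MCat$ to the single object $e=e^1$, so that composing with $q$ substitutes a genuine $e^1$ in place of one of the two variables of the arity-two monomial $p_a$; this $e^1$ survives the Boolean multiplication and remains adjacent to the surviving variable, whence $p_a\circ_i q_a$ is an arity-one monomial carrying at least one $e$ and is never the bare unit $1$. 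The composite is again not the forbidden tuple, and closure follows.

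The main obstacle, I expect, is conceptual rather than computational, and it is exactly the point emphasized just before the statement: the face operators genuinely fail to preserve $\overline{\DGB(\MCat)}_n$, so I must take care to isolate the operadic structure from the simplicial structure and to check that closure is only a question about the former. Within the operadic structure the one delicate case is the composition with the arity-zero operation $e$, which lands precisely in the restricted arity $1$; what rescues closure is the observation that $\Ob\MCat(0)$ consists of the \emph{idempotent} $e$ rather than an empty arity-zero word, so substitution can only create, never erase, occurrences of $e$ and therefore cannot manufacture the unit. Once closure is proved, the fact that $\overline{\EOp}_n$ forms an operad without unit is immediate, since the unit is exactly the element that has been removed.
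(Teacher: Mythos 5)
Your proposal is correct and follows the same route as the paper: the paper's (one-line) proof likewise reduces the claim to the vertex sets $\overline{\Ob\MCat}$ via the algebraic monomial description of $\Ob\MCat$, the higher simplices being handled automatically because $\MCat(r)$ is indiscrete and composition is coordinate-wise on tuples of objects. You have simply written out in full the two critical cases $(k,l)=(1,1)$ and $(k,l)=(2,0)$ that the paper leaves implicit, and your computations there are accurate.
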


In fact, this statement gives the main property of the operad $\EOp = \DGB(\MCat)$ which we use in our subsequent constructions.

\begin{proof}
The algebraic description of the object-set operad $\Ob\MCat$ shows that
this property holds for the collection
of vertex sets $\overline{\Ob\MCat} = \overline{\DGB(\MCat)}_0$.
The conclusion of the lemma follows.
\end{proof}

\subsection{The $W$-construction and its unitarization}\label{subsec:Wconstruction}
We assume that $\POp$ is an operad in topological spaces for the moment. Briefly recall that the spaces $W(\POp)(r)$ underlying the $W$-construction $W(\POp)$
consist of collections $[T;p_x,x\in VT;l_e,e\in\mathring{ET}]$,
where $T$ is an $r$-tree (a tree with $r$ ingoing edges numbered from $1$ to $r$),
while $p_x\in\POp(r_x)$ is an operation associated to each vertex $x\in VT$ (whose number of ingoing edges is denoted by $r_x$),
and $l_e\in [0,1]$ is a length associated to each inner edge $e\in\mathring{ET}$ (see~\cite{BergerMoerdijkW,BoardmanVogt}).
To represent such a collection, we generally use a decoration of the tree $T$,
with the elements $p_x\in\POp(r_x)$ on the vertices $x\in VT$,
and the parameters $l_e\in [0,1]$
on the inner edges $e\in\mathring{ET}$.
If we have $l_{\alpha} = 0$ for some internal edge $\alpha\in\mathring{ET}$
with $v$ as target vertex and $u$ as source vertex,
then we have the relation:
\begin{equation}\label{eq:edge contraction}
\vcenter{\xymatrix@R=1.5em{ & \ar@{-}[dr] & \cdots & \ar@{-}[dl] \\
\ar@{-}[dr] & \cdots & p_v\ar@{-}[dl]|{l_{\alpha}=0} & \\
& p_u\ar@{-}[d] && \\
& \cdots && }}\equiv\vcenter{\xymatrix@R=1.5em{ \ar@{-}[dr] & \cdots & \ar@{-}[dl] & \cdots & \ar@{-}[dlll] \\
& p_u\circ_{\alpha} p_v\ar@{-}[d] &&& \\
& \cdots &&& }},
\end{equation}
where we contract the edge $\alpha$ in $T$, and we perform the composition operation $p_u = p_v\circ_{\alpha} p_w$
to get an element of $W(\POp)$ shaped on the tree $T/\alpha$.
In $W(\POp)$, we also implement the relation:
\begin{equation}\label{eq:unit contraction}
\vcenter{\xymatrix@R=1.5em{ \cdots\ar@{-}[d]_{l_{\beta}} \\
1\ar@{-}[d]_{l_{\alpha}} \\ \cdots }}\equiv\vcenter{\xymatrix{ \cdots\ar@{-}[d]^{\max(l_{\alpha},l_{\beta})} \\ \cdots }}
\end{equation}
when we have a vertex labeled by the unit element of the operad $1\in\POp(1)$.
The operadic composite $\circ_i$ of elements shaped on decorated trees $S$ and $T$ in $W(\POp)$
is obtained by plugging the outgoing edge of the tree $T$ in the $i$th ingoing edge
of $S$,
and by assigning the length $l=1$ to this new inner edge of the composite tree $S\circ_i T$.
Recall that we have a weak-equivalence $W(\POp)\xrightarrow{\sim}\POp$, defined by forgetting about the length of the edges
and by performing the composition operations shaped on our trees
in the operad $\POp$.

In $\tau W(\POp)$, we implement the extra reduction relation
\begin{equation}\label{eq:reduction relation}
\vcenter{\xymatrix@R=1.5em{ *+<2pt>{*}\ar@{-}[dr] & \cdots\ar@{}[d]|{\textstyle S} & *+<2pt>{*}\ar@{-}[dl] \\
& \ar@{-}[d]^{l_0=1} & \\
& \cdots & }}\equiv\vcenter{\xymatrix@R=1.5em{ *+<2pt>{*}\ar@{-}[d]^{l=1} \\
\cdots }},
\end{equation}
when we have a whole subtree $S$ with an outgoing edge of length $l_0 = 1$ in which all chains of edges abut to an element
of arity zero $*\in\POp(0)$.

\subsection{The cofibrant structure of the $W$-construction and the reduction to a non-unital $W$-construction}\label{subsec:nonunitalWconstruction}
We already mentioned that the model category of operads in simplicial sets
is Quillen equivalent to the model category of operads
in topological spaces.
Therefore, we now assume (without loss of generality in our statement) that $\POp$ is an operad in simplicial sets,
of which we can take the geometric realization $|\POp|$ to pass to the category of operads
in topological spaces.
Then we consider the operad $\EOp\times\POp$ such that $(\EOp\times\POp)(r) = \EOp(r)\times\POp(r)$, for any $r\in\N$.
We can still take the geometric realization of this operad to get an operad in topological spaces $|\EOp\times\POp|$.
We then have $\EOp\sim\pt\Rightarrow\EOp\times\POp\sim\POp\Rightarrow|\EOp\times\POp|\sim|\POp|$.
We moreover get that the $W$-construction of this operad $W(|\EOp\times\POp|)$
is cofibrant as an operad in topological spaces, which is not the case of the operad $W(|\POp|)$
in general (when the symmetric groups do not operate
freely on the components of the operad $\POp$).
We just review the proof of a counterpart of this claim in the category of operads in simplicial sets in the next paragraph.
We refer \cite{BergerMoerdijkW} for a more detailed study of the definition and properties of the $W$-construction
in the general setting of model categories.

We can actually stay in the category of operads in simplicial sets for our study, because we have an identity $W(|\EOp\times\POp|) = |W(\EOp\times\POp)|$,
where we consider a simplicial version of the $W$-construction $W(\EOp\times\POp)$
before passing to the geometric realization.
In order to adapt the definition of the $W$-construction to the simplicial setting, and hence, in order to define
this object $W(\EOp\times\POp)$, we just replace the interval $[0,1]$ by the $1$-simplex $\Delta^1$
in the definitions
of the previous paragraph~(\S\ref{subsec:Wconstruction}).
Note simply that the unit reduction relation (Eq. \ref{eq:unit contraction}) involves the geometric realization
of a simplicial map $m: \Delta^1\times\Delta^1\rightarrow\Delta^1$,
so that we can still give a sense to this relation
within the category of simplicial sets.

The operad $W(\EOp\times\POp)$, which we obtain by taking this simplicial $W$-construction, is cofibrant as an operad in simplicial sets (just like
the topological $W$-construction $W(|\EOp\times\POp|)$ of the operad $|\EOp\times\POp|$
is cofibrant as an operad in topological spaces).
This assertion can be deduced from the observation that the operad $W(\EOp\times\POp)$
admits a free structure in each simplicial dimension.
To be more precise, we have an identity $W(\EOp\times\POp)_n = \FOp(\mathring{W}(\EOp\times\POp)_n)$ in every simplicial dimension $n\in\N$,
for a generating collection $\mathring{W}(\EOp\times\POp)\subset W(\EOp\times\POp)$
which is preserved by the degeneracy operators but not by the face operators
of the simplicial structure on $W(\EOp\times\POp)$. In what follows, we say that our operad is quasi-free,
rather than free, to single out such a structure result.
Nevertheless, for simplicity, we still write $W(\EOp\times\POp) = \FOp(\mathring{W}(\EOp\times\POp))$, omitting the simplicial dimension,
and without specifying the category in which we form this relation
as long as this is made clear by the context.
This symmetric sequence $\mathring{W}(\EOp\times\POp)$
consists of decorated trees such that $l_e\not=1$,
for all inner edges $e$. The indecomposable factors of the operadic decomposition of a decorated tree in $W(\EOp\times\POp)$
are the subtrees obtained by cutting all inner edges of length $l_e = 1$.
The components $\mathring{W}(\EOp\times\POp)(r)$ of the symmetric sequence $\mathring{W}(\EOp\times\POp)$
inherit a free action of the symmetric group (we use the free symmetric structure of the cartesian product $\EOp\times\POp$
at this point),
and the quasi-free operad $W(\EOp\times\POp) = \FOp(\mathring{W}(\EOp\times\POp))$
is cofibrant under this condition (see for instance~\cite[Theorem II.8.2.]{FresseBook})

We can now regard the $W$-construction $W(\EOp\times\POp)$ as an operad in bisimplicial sets
with one simplicial dimension inherited from the simplices $\Delta^1$,
which we attach to the inner edges of our decorated trees,
and the other simplicial dimension given by the internal simplicial grading of the operad $\EOp\times\POp$.
We then take the diagonal complex $W(\EOp\times\POp) = \Diag W_{\bullet}((\EOp\times\POp)_{\bullet})$
to retrieve an operad in simplicial sets
from this bisimplicial object.
In the next lemma, we consider the operads in simplicial sets $W((\EOp\times\POp)_n)$, $n\in\N$,
which we form by fixing the internal simplicial dimension of the operad $\EOp\times\POp$
in this bisimplicial object.
We use that the result of Lemma~\ref{lemm:nonunital operad structure} extends to the collections of sets $(\overline{\EOp\times\POp})_n$
which we form by dropping the unit object $(1,1)\in \EOp(1)\times\POp(1)$
from these operads in sets $(\EOp\times\POp)_n = \EOp_n\times\POp_n$, $n\in\N$.
We then get the following statement:

\begin{lemm}\label{lemm:nonunital W}
We have an identity of simplicial operads $W((\EOp\times\POp)_n) = W'((\overline{\EOp\times\POp})_n)$, for each $n\in\N$,
where $W'$ denotes a version of the $W$-construction for non-unital operads
which we define by forgetting about the unit reduction relation of the standard construction (the relation of Eq. \ref{eq:unit contraction}).
\end{lemm}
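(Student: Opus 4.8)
The plan is to exhibit the asserted identity as a bijective correspondence between the decorated trees underlying the two constructions, and then to check that this correspondence is compatible with the operadic composition operations and with the simplicial structure carried by the edge-length coordinates. Both $W((\EOp\times\POp)_n)$ and $W'((\overline{\EOp\times\POp})_n)$ are unital operads whose operadic unit is the trivial tree (a single edge carrying no vertex), and they visibly agree on this unit; I therefore concentrate on the positive part, where each side consists of decorated trees modulo the relevant relations. The only difference between the two sides is that $W((\EOp\times\POp)_n)$ admits vertices labelled by the operadic unit $(1,1)\in\EOp(1)_n\times\POp(1)_n$ together with the unit reduction relation (Eq. \ref{eq:unit contraction}), whereas $W'((\overline{\EOp\times\POp})_n)$ forbids such vertices and accordingly drops this relation.

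First I would treat the inclusion direction. The inclusion of collections $\overline{\EOp\times\POp}_n\hookrightarrow(\EOp\times\POp)_n$ sends a tree decorated by non-unit operations to the same decorated tree, now regarded as an element of $W((\EOp\times\POp)_n)$. This assignment is an operad morphism: the grafting operations (grafting along a new edge of length $1$) agree on both sides, and the edge contraction relation (Eq. \ref{eq:edge contraction}) on the source maps to the same relation on the target. The point that makes this consistent is Lemma~\ref{lemm:nonunital operad structure}, extended to $\overline{\EOp\times\POp}_n$ as explained before its statement: a composite $p_u\circ_\alpha p_v$ of two non-unit operations is again a non-unit operation, so contracting a length-zero edge never produces the operadic unit, and the source construction $W'$ is a well-defined operad to begin with.

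In the reverse direction I would define a reduction map $W((\EOp\times\POp)_n)\to W'((\overline{\EOp\times\POp})_n)$ by applying the unit reduction relation exhaustively, so as to delete every vertex labelled by the operadic unit $(1,1)$ and to merge the two adjacent edges with the length $\max(l_\alpha,l_\beta)$. The resulting decorated tree carries only non-unit labels, hence lies in $W'((\overline{\EOp\times\POp})_n)$. The two maps are then inverse to each other: the composite $W'\to W\to W'$ fixes a non-unit tree, since it has no unit vertex to remove, while the composite $W\to W'\to W$ is the identity because every reduction step is, by definition of the unit reduction relation, an equality in $W((\EOp\times\POp)_n)$.

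The main obstacle is the well-definedness of this reduction map, and this is where the care has to go. I have to check, first, that the reduction terminates and is confluent, so that it produces a canonical unit-free representative independent of the order in which the unit vertices are removed; for a chain of adjacent unit vertices this reduces to the associativity, commutativity and idempotence of the operation $m\colon\Delta^1\times\Delta^1\to\Delta^1$ appearing in Eq. \ref{eq:unit contraction} (the join on the poset $[1]$), which is exactly the structure that also makes the relation a simplicial map in the edge-length coordinates. Second, I have to verify compatibility with the other relation: contracting a length-zero edge one of whose endpoints is a unit vertex coincides with deleting that vertex, since composing with the operadic unit returns the other operation unchanged, so the edge contraction and unit reduction relations descend compatibly to the quotient. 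Granting these verifications, the correspondence is an isomorphism of simplicial operads in the edge-length variables, and this yields the stated identity for each fixed $n\in\N$.
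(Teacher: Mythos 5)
Your proof is correct and follows the same route the paper takes: the paper in fact states this lemma without a written proof, treating it as an immediate consequence of the extension of Lemma~\ref{lemm:nonunital operad structure} to the collections $(\overline{\EOp\times\POp})_n$ (composites of non-unit operations are non-unit, so the edge contraction relation never creates a unit-labelled vertex, and the unit reduction relation can be applied once and for all to pass to unit-free representatives). The additional verifications you single out --- termination and confluence of the unit reduction, governed by the associative idempotent structure of the map $m\colon\Delta^1\times\Delta^1\to\Delta^1$, and its compatibility with the edge contraction relation --- are exactly the details the paper leaves implicit.
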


We now assume $\POp(0) = *$. We note that the operad $\EOp$ satisfies the relation $\EOp(0) = *$ too,
and as a consequence, so does the operad $\EOp\times\POp$.
We accordingly get that the augmentation of the $W$-construction $W(\EOp\times\POp)\rightarrow \EOp\times\POp$
induces a morphism $\tau W(\EOp\times\POp)\rightarrow \EOp\times\POp$
in the category $\Op_*$
by adjunction.
We have an obvious counterpart of the result of the previous lemma for the operad $\tau W((\EOp\times\POp)_n)$.
We use this observation in the verification of the following claim:

\begin{lemm}\label{lemm:W horizontal equivalence}
The augmentation map $\tau W((\EOp\times\POp)_n)(r)\rightarrow(\EOp\times\POp)_n(r)$ defines a weak-equivalence of simplicial sets,
for each dimension $n\in\N$ and for any arity $r\in\N$, where we regard the set $(\EOp\times\POp)_n(r)$
as a discrete simplicial set.
\end{lemm}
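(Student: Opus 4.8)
The plan is to fix the arity $r$ and the internal simplicial dimension $n$, and to exhibit an explicit simplicial deformation retraction of $\tau W((\EOp\times\POp)_n)(r)$ onto the discrete set of composites, with the augmentation as the retraction; this immediately yields the asserted weak-equivalence. First I would invoke Lemma~\ref{lemm:nonunital W}, together with its $\tau$-counterpart recorded just above the present statement, so as to replace $\tau W((\EOp\times\POp)_n)$ by the unitarization of the non-unital $W$-construction $\tau W'((\overline{\EOp\times\POp})_n)$. Here the operad $\QOp := (\EOp\times\POp)_n$ is discrete and unitary, so that $\tau\QOp = \QOp$, the target $(\EOp\times\POp)_n(r) = \QOp(r)$ is a genuine discrete simplicial set, and the augmentation forgets the edge-lengths and performs all operadic composites. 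In these terms I want to contract $\tau W'(\QOp)(r)$ onto the set of corollas $\{[\text{corolla decorated by } q]: q\in\QOp(r)\}$, which maps isomorphically to $\QOp(r)$ under the augmentation.

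The natural candidate is the classical contracting homotopy of the $W$-construction, which scales every edge-length by a parameter $s\in\Delta^1$ and contracts all edges as $s\to 0$, composing the whole tree into the corolla over $q$ (see~\cite[\S III.1]{BoardmanVogt}). As noted in the introduction (and in \S\ref{subsec:homotopy remark}), this homotopy does not descend to the quotient $\tau W'$: the reduction relation~\eqref{eq:reduction relation} identifies an all-$*$ subtree with a single arity-zero vertex only when the outgoing length equals $1$, whereas the two sides become inequivalent as soon as this length is scaled below $1$. I would therefore modify the homotopy so that the scaling is performed only \emph{after} the reduction relation has been applied: given a class in $\tau W'(\QOp)(r)$, I first pass to the reduced representative in which every maximal all-$*$ subtree with outgoing length $1$ has been collapsed to a single vertex $*$ via~\eqref{eq:reduction relation}, and only then scale all remaining lengths to $0$. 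For $s<1$ every length is strictly below $1$, so no further instance of~\eqref{eq:reduction relation} is triggered during the deformation, and as $s\to 0$ all edges contract through~\eqref{eq:edge contraction}, absorbing the arity-zero vertices into the operadic composite and producing the corolla over $q$.

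The main work, and the main obstacle, is to verify that this modified rule really defines a simplicial map $\tau W'(\QOp)(r)\times\Delta^1\to\tau W'(\QOp)(r)$: that it is well defined on equivalence classes and compatible with the simplicial operators in the length coordinates. Well-definedness reduces to the confluence and termination of the reduction~\eqref{eq:reduction relation}, so that each class admits a unique reduced representative and the scaling becomes unambiguous; compatibility with faces and degeneracies follows from the observation that the reduction is triggered by the length coordinate of an outgoing edge being the constant map onto $1\in\Delta^1$, a condition stable under the simplicial operators. At this point the special choice of $\EOp$ is essential: by Lemma~\ref{lemm:nonunital operad structure} the collection $\overline{\EOp}$ is closed under operadic composition, so composing non-unit operations never produces the unit. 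Consequently the edge-contractions performed along the homotopy never create a vertex labelled by the operadic unit, the unit reduction relation~\eqref{eq:unit contraction} never intervenes, and we remain within the non-unital construction $W'$ throughout---which is exactly what makes the contracting homotopy well defined on $\tau W'(\QOp)$. Granting these verifications, the homotopy is a deformation retraction onto the discrete set of corollas, the augmentation---which is constant along the deformation---is the corresponding retraction, and the lemma follows.
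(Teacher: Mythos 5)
Your diagnosis of why the classical homotopy fails on $\tau W'$ is exactly right, but your proposed fix --- pass to the reduced representative first, then scale all lengths --- does not work, and the difficulty is precisely the one you flag as ``the main obstacle'' and then dispose of too quickly. The rule ``reduce, then scale'' is well defined as a function on equivalence classes, but it is not continuous (equivalently, not a simplicial map): a map out of the quotient is continuous only if the composite with the quotient projection is, i.e.\ only if the recipe, applied to \emph{unreduced} representatives, is continuous. Consider a family of decorated trees containing an all-$*$ subtree $S$ whose outgoing edge has length $l_0 = 1-\varepsilon$. For $\varepsilon>0$ no reduction applies, so at homotopy parameter $s<1$ your map retains the whole subtree $S$ with scaled internal lengths and outgoing length $s(1-\varepsilon)$. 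At $\varepsilon=0$ the class is first reduced to a single $*$ on an edge of length $1$, and then scaled to a single $*$ on an edge of length $s$. The limit of the former as $\varepsilon\to 0$ is $S$ with outgoing length $s<1$, which relation~\eqref{eq:reduction relation} does \emph{not} identify with the single $*$ on an edge of length $s$. So the homotopy jumps exactly on the locus where the reduction is triggered. The same failure appears simplicially: for a simplex whose outgoing-length coordinate is a nonconstant element of $\Delta^1$ with a face equal to the constant $1$, one gets $d_i H(\sigma,\tau)\neq H(d_i\sigma,d_i\tau)$. Your appeal to the stability of the condition ``length $=1$'' under simplicial operators does not repair this, because the two case-formulas of your definition disagree on the boundary between the cases.

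The paper avoids this by changing coordinates rather than changing the order of operations: it reparametrizes a decorated tree by assigning to each vertex $x$ its height $h_x$ (the sum of the lengths along the path to the root), and deforms by truncation, $h_x^t=\min(h_x,t)$. This is compatible with~\eqref{eq:reduction relation} because all vertices of an all-$*$ subtree $S$ sit strictly above the target vertex of its outgoing edge, so the truncation shortens that outgoing edge only while simultaneously collapsing the internal edges of $S$; at every time $t$ both sides of the relation are carried to the same point (a single $*$, or the fully composed element, at the truncated height). This is also where Lemma~\ref{lemm:nonunital W} and the special choice of $\EOp$ enter for the paper: the height function is only well defined in the absence of the unit reduction relation~\eqref{eq:unit contraction}, since $\max(l_\alpha,l_\beta)$ is not additive along paths. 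Your use of Lemma~\ref{lemm:nonunital operad structure} (compositions never create the unit) is correct but serves a different, and here insufficient, purpose. To complete your argument you would need to replace the ``reduce-then-scale'' homotopy by one, such as the height truncation, that treats the two sides of~\eqref{eq:reduction relation} coherently at every intermediate time.
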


\begin{proof}
We set $\NOp_n = (\overline{\EOp\times\POp})_n$ for short and we use the identity $\tau W((\EOp\times\POp)_n) = \tau W'(\NOp_n)$.
We aim to prove that we have a weak-equivalence of simplicial sets $\epsilon: \tau W'(\NOp_n)(r)\xrightarrow{\sim}\NOp_n(r)$,
for each dimension $n\in\NOp$ and for any arity $r\in\N$.
We can take the geometric realization of this map $\epsilon: |\tau W'(\NOp_n)(r)|\rightarrow\NOp_n(r)$
to establish this claim.
We have an obvious map going the other way round $\eta: \NOp_n(r)\rightarrow|\tau W'(\NOp_n)(r)|$,
which merely carries any element $p\in\NOp_n(r)$
to an $r$-corolla with $p$ as label in $\tau W'(\NOp_n)(r)$.
We have $\epsilon\eta = \id$ and we are left to verifying the relation $\eta\epsilon\simeq\id$
in the homotopy category of spaces.

We proceed as follows. Let $\varpi_T = [T;p_x,x\in VT;l_e,e\in\mathring{ET}]$ be a collection
which represents a point of the cell complex $|\tau W'(\NOp_n)(r)|$.
Instead of assigning a length to the internal edges $l_e$, $e\in\mathring{ET}$,
we can equivalently assign a height $h_x\in [0,\infty[$ to each vertex $x\in VT$,
with the following rules (Eq. \ref{eq:height assignment}), in order to parameterize the elements
of our complex:
\begin{equation}\label{eq:height assignment}
h_x = \begin{cases} 0, & \text{if $x$ is the source vertex of the outgoing edge of the tree (the root)}, \\
h_y + l_e, & \text{if $x$ is a source vertex of an inner edge $e\in\mathring{ET}$
with $y$ as target vertex}.
\end{cases}
\end{equation}
For instance, in the case of the decorated $3$-tree
\begin{equation}\label{eq:height assignment example}
\varpi_T = \vcenter{\xymatrix@R=1.5em{ &&& p_{x_2}\ar@{-}[dr]|{l_2} && p_{x_3}\ar@{-}[dl]|{l_3} & 3\ar@{-}[d] \\
1\ar@{-}[drrr] && p_{x_1}\ar@{-}[dr]|{l_1} & 2\ar@{-}[d] & p_{x_4}\ar@{-}[dl]|{l_4} && p_{x_5}\ar@{-}[dlll]|{l_5} \\
&&& p_{x_0}\ar@{-}[d] && \\
&&&&& }},
\end{equation}
with $VT = \{x_0,x_1,x_2,x_3,x_4,x_5\}$, we get $h_{x_0} = 0$, $h_{x_5} = l_5$, $h_{x_4} = l_4$,
$h_{x_3} = l_3 + l_4$, $h_{x_2} = l_2 + l_4$,
and $h_{x_1} = l_1$.
In order to give a sense to this correspondence, we crucially use that no unit reduction relation occurs in $|\tau W'(\NOp_n)(r)|$.
Indeed, the height functions would not be well-defined otherwise. (Thus, we use the result of Lemma~\ref{lemm:nonunital W}
and the structure properties of the cartesian product with the extended Barratt-Eccles operad $\NOp = \EOp\times\POp$
at this point.)

We consider the continuous family $\varpi_T^t\in |\tau W'(\NOp_n)(r)|$, $t\in [0,\infty]$,
defined by making these height parameters
vary by the formula:
\begin{equation}
h^t_x = \min(h_x,t),
\end{equation}
for all $x\in VT$, with the obvious convention $\min(h,\infty) = h$. We readily check that the mapping $(t,\varpi_T)\mapsto\varpi_T^t$
is compatible with the identification relations of the cell complex $|\tau W'(\NOp_n)(r)|$.
We therefore get that this mapping $(t,\varpi_T)\mapsto\varpi_T^t$
defines a continuous family of maps $\rho_t: |\tau W'(\NOp_n)(r)|\rightarrow |\tau W'(\NOp_n)(r)|$
such that $\rho_{\infty} = \id$ and $\rho_0 = \eta\epsilon$,
since making the assignment $h^0_x = \min(h_x,0)$
amounts to assigning the length $l_e = 0$ to all inner edges $e\in\mathring{ET}$ of the tree $T$,
and hence, to contracting these edges in $|\tau W'(\NOp_n)(r)|$.

We can compose the mapping $t\mapsto\rho_t$ with the function $t\mapsto t/(1-t)$
to retrieve a continuous family of maps defined for a value $t\in [0,1]$
of the time parameter $t$, as in the usual definition
of a homotopy. Note that we have $h^t_x\equiv h_x$ for $t\gg h$,
for a bound $h$ that only depends on the tree superstructure $T$,
so that no continuity problem
occurs at $t = \infty$
in our construction.
\end{proof}

We can now complete the:

\begin{proof}[Proof of Theorem~\ref{thm:Goal}]
We use the general statement that a horizontal weak-equivalence of bisimplicial sets $\phi: X_{\bullet n}\xrightarrow{\sim} Y_{\bullet n}$, $n\in\N$,
induces a weak-equivalence when we pass to the diagonal complex $\phi: \Diag X_{\bullet\bullet}\xrightarrow{\sim}\Diag Y_{\bullet\bullet}$
to conclude that the weak-equivalences of the previous lemma $\tau W((\EOp\times\POp)_n)(r)\xrightarrow{\sim}(\EOp\times\POp)_n(r)$, $n\in\N$,
induce a weak-equivalence of simplicial sets $\tau W(\EOp\times\POp)(r)\xrightarrow{\sim}(\EOp\times\POp)(r)$,
for each arity $r\in\N$,
which is nothing but the claim of Theorem~\ref{thm:Goal}.
\end{proof}

Recall that Theorem~\ref{thm:MainResult} is a corollary of Theorem~\ref{thm:Goal}.
Thus, the previous verification also completes the proof of Theorem~\ref{thm:MainResult}.\qed

\subsection{Remark}\label{subsec:homotopy remark}
In the introduction of the paper, we mention that we need to adapt the classical proof that the $W$-construction $W(\POp)$ is equivalent to the given operad $\POp$
in the case of the unitary operad $\tau W(\POp)$.
In short, in the classical construction, authors use the same definition as ours to define a section $\eta: \POp(r)\rightarrow W(\POp)(r)$
of the augmentation map $\epsilon: W(\POp)\rightarrow\POp$ in each arity. Then one makes the length of edges vary by the formula $l_e^t = \min(l_e,t)$
in a decorated tree in order to get a homotopy between the identity map on $W(\POp)$
and the composite $\eta\epsilon: W(\POp)\rightarrow W(\POp)$, and the conclusion that $\epsilon: W(\POp)\rightarrow\POp$
defines a weak-equivalence follows (see for instance~\cite[\S III.1]{BoardmanVogt}).
This construction does not work in the case of the unitary operad $\tau W(\POp)$, because the mapping $l_e\mapsto l_e^t = \min(l_e,t)$
does not preserve the extra reduction relation (Eq. \ref{eq:reduction relation})
which we implement in $\tau W(\POp)$.
Therefore, in the proof of Lemma~\ref{lemm:W horizontal equivalence}, we define a homotopy by making the height of edges vary rather than the length.

\subsection{Remark}\label{subsec:remark}
In \S\ref{subsec:BarrattEccles}, we can identify the category of algebras associated to the operad $\MCat$
with the category of categories $\CCat$
equipped with a strictly associative tensor product $\otimes: \CCat\times\CCat\rightarrow\CCat$
and an object $e\in\CCat$, which is strictly idempotent $e\otimes e = e$,
but which only satisfies the unit relations of tensor products up to natural isomorphisms
in general $x\otimes e\simeq x\simeq e\otimes x$ (compare with the statement of \cite[Theorem I.6.3.2-6.3.3]{FresseBook}).
The tensor product $\otimes: \CCat\times\CCat\rightarrow\CCat$ represents the image of the generating operation $\mu\in\Ob\MCat(2)$
under the functor $\phi: \MCat\rightarrow\End_{\CCat}$ that encodes the action of the operad $\MCat$
on $\CCat$, where $\End_{\CCat}$ denotes the endomorphism operad
of $\CCat$
in the category of categories.
The object $e\in\CCat$ represents the image of the zero-ary operation $e\in\Ob\MCat(0)$.
The natural transformations $x\otimes e\simeq x\simeq e\otimes x$, for $x\in\CCat$,
are given by the image of the corresponding isomorphisms $x_1 e\simeq x_1\simeq e x_1$
in the morphism sets of the category $\MCat(1)$.
(We refer to \cite[\S I.6.3]{FresseBook} for a detailed proof of several variants of this correspondence.)

\section{The case of $k$-truncated operads}

\setcounter{thm}{0}
\renewcommand{\thethm}{\arabic{thm}'}

We now examine the proof of the counterpart of our main statements for $k$-truncated operads.
We mainly briefly check that our constructions and argument lines go through (or can be adapted) in this setting.
We recall the definition of the category of $k$-truncated operads first.
We review the proof of our main results afterwards.

We formally call ``$k$-truncated operad'' the structure formed by an operad $\POp$ whose components $\POp(r)$ are only defined for $r\leq k$,
and where we restrict ourselves to composition products $\circ_i: \POp(m)\times\POp(n)\rightarrow\POp(m+n-1)$
that preserve this arity bound.
This condition is equivalent to the relation $m+n-1\leq k$ in the case $n>0$,
and to the relation $m\leq k$ in the case $n=0$.
In the definition of a $k$-truncated operad, we also restrict the application of the associativity relation of operads
to the cases where the composition products involved in the relation preserve
the arity bound, so that our relation makes sense.

We use the notation $\Op^{\leq k}$ for this category of $k$-truncated operads, and the notation $\Op_*^{\leq k}\subset\Op^{\leq k}$
for the associated subcategory of unitary operads, for which we assume $\POp(0) = *$.
We can also adapt the definition of the notion of a $\Lambda$-operad
to the $k$-truncated setting.
We then consider operads equipped with restriction operators $u^*: \POp(n)\rightarrow\POp(m)$
defined for all injective maps $u: \{1<\dots<m\}\rightarrow\{1<\dots<n\}$
such that $1\leq m\leq n\leq k$.
We also restrict ourselves to the cases where the arity bound is preserved in the expression of equivariance
of composition products $\circ_i$ with respect to these restriction
operators in~\cite[Proposition I.2.2.16]{FresseBook}.
We use the notation $\Lambda\Op_{\varnothing}^{\leq k}$ for this $k$-truncated analogue of the category of $\Lambda$-operads.
We again have an isomorphism of categories $\Op_*^{\leq k}\cong\Lambda\Op_{\varnothing}^{\leq k}$.
We mostly deal with the category $\Op_*^{\leq k}$ (rather than $\Lambda\Op_{\varnothing}^{\leq k}$) in what follows.

The definition of free operads has an obvious counterpart in the $k$-truncated context, which gives a left adjoint $\FOp^{\leq k}: \Seq^{\leq k}\rightarrow\Op^{\leq k}$
of the obvious forgetful functor $\omega: \Op^{\leq k}\rightarrow\Seq^{\leq k}$ from the category of $k$-truncated operads $\Op^{\leq k}$
to the category of $k$-truncated symmetric sequences $\Seq^{\leq k}$ (the category of symmetric sequences $\MOp$
with components $\MOp(r)$ defined for $r\leq k$).
Recall that the ordinary free operad $\FOp(\MOp)$ generated by a symmetric sequence $\MOp$
consists of decorated trees $T$ with $r$-ingoing edges, numbered from $1$ to $r$,
and whose vertices $x$ are labeled by elements $\xi_x\in\MOp(r_x)$
of the symmetric sequence $\MOp$,
where we again use the notation $r_x$ to denote the number of ingoing edges
of our vertex $x$ in the tree $T$.
In the case of $k$-truncated operads $\FOp^{\leq k}(\MOp)$, we just restrict ourselves to the case $r\leq k$,
and we assume $r_x\leq k$ for all vertices of our trees $x\in VT$.
The adjunction morphism $\lambda: \FOp^{\leq k}(\POp)\rightarrow\POp$
carries any such decorated tree with $\MOp = \POp$
to a corresponding treewise composite operation
in the operad $\POp$.
Let us mention that some care is necessary in the context of $k$-truncated operads since some intermediate composites
which we may form by contracting subtrees in a treewise tensor product (see~\cite[\S A.2.5]{FresseBook})
go beyond our arity bound. To avoid this problem, we can evaluate all composites with factors of arity zero at first.
Then we obtain a treewise tensor product shaped on a tree with $r\leq k$ ingoing edges
and in which all indecomposable factors have a positive arity $r_x>0$.
This property ensures that all partial composites which we may form inside our treewise tensor product
do not go above our arity bound.
%Note that we have an obvious $k$-truncation functor on operads $(-)|_{\leq k}: \Op\rightarrow\Op^{\leq k}$,
%and similarly in the context of symmetric sequences $(-)|_{\leq k}: \Seq\rightarrow\Seq^{\leq k}$,
%but the free $k$-truncated operad $\FOp^{\leq k}(\MOp|_{\leq k})\in\Op^{\leq k}$ associated to the $k$-truncation of a symmetric sequence $M|_{\leq k}\in\Seq^{\leq k}$
%does not agree with the $k$-truncation of the free operad $\FOp(\MOp)\in\Op$
%in general.

We can adapt the definition of the projective model structure of operads (in simplicial sets,
in topological spaces) to the category of $k$-truncated operads $\Op^{\leq k}$.
We again assume that a morphism of $k$-truncated operads is a weak-equivalence (respectively, a fibration)
if this morphism forms a weak-equivalence (respectively, a fibration)
in the base category (of simplicial sets, of topological spaces)
arity-wise, and we characterize the cofibrations by the left lifting property with respect to the class
of acyclic fibrations.
We take the morphisms of free objects $\FOp^{\leq k}(i): \FOp^{\leq k}(\MOp)\rightarrow\FOp^{\leq k}(\NOp)$
induced by generating (acyclic) cofibrations
of the category of $k$-truncated symmetric sequences
as a set of generating (acyclic) cofibrations in this model category $\Op^{\leq k}$.
We can also adapt the definition of the Reedy model category of $\Lambda$-operads $\Lambda\Op_{\varnothing}$ in \cite[\S II.8.4]{FresseBook}
to the category of $k$-truncated $\Lambda$-operads $\Lambda\Op_{\varnothing}^{\leq k}$.
We again use the isomorphism of categories $\Op_*^{\leq k}\cong\Lambda\Op_{\varnothing}^{\leq k}$ to transport this model structure
to the category of $k$-truncated unitary operads $\Op_*^{\leq k}$. For our purpose, we can equivalently equip
this category $\Op_*^{\leq k}$ with a restriction of the projective model structure
on $\Op^{\leq k}$.

We can still form a Quillen adjunction between our model categories of $k$-truncated operads $\tau: \Op^{\leq k}\rightleftarrows\Op_*^{\leq k} :\iota$
by taking the canonical category embedding $\iota: \Op_*^{\leq k}\hookrightarrow\Op^{\leq k}$
on the one hand and the obvious $k$-truncated counterpart of our unitarization
functor $\tau: \Op^{\leq k}\rightarrow\Op_*^{\leq k}$ on the other hand.
We then have the following counterpart of the result of Theorem~\ref{thm:MainResult}:

\begin{thm}\label{thm:MainResult:kTruncatedOperads}
The functor $\iota: \Op_*^{\leq k}\hookrightarrow\Op^{\leq k}$ induces a weak-equivalence on derived mapping spaces:
\begin{equation*}
\Map_{\Op_*^{\leq k}}^h(\POp,\QOp)\sim\Map_{\Op^{\leq k}}^h(\iota\POp,\iota\QOp),
\end{equation*}
for all $k$-truncated operads $\POp,\QOp\in\Op_*^{\leq k}$.
\end{thm}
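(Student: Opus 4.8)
The plan is to transcribe the proof of Theorem~\ref{thm:MainResult} into the $k$-truncated setting, inserting arity bounds at each step. First I would exploit the Quillen adjunction $\tau: \Op^{\leq k}\rightleftarrows\Op_*^{\leq k} :\iota$ recalled above to reduce the statement to the computation of a derived counit: since the adjunction yields
\begin{equation*}
\Map_{\Op^{\leq k}}^h(\iota\POp,\iota\QOp)\sim\Map_{\Op_*^{\leq k}}^h(\DGL\tau(\iota\POp),\QOp),
\end{equation*}
it suffices to establish the $k$-truncated analogue of Theorem~\ref{thm:Goal}, namely that $\DGL\tau(\iota\POp)\xrightarrow{\sim}\POp$ for every $\POp\in\Op_*^{\leq k}$. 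As in the non-truncated case I would compute the left derived functor by a concrete cofibrant resolution $\ROp\xrightarrow{\sim}\iota\POp$ in $\Op^{\leq k}$ and then verify that the induced map $\tau\ROp\xrightarrow{\sim}\POp$ is a weak-equivalence in $\Op_*^{\leq k}$.

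For the resolution I would take $\ROp = W^{\leq k}(\EOp^{\leq k}\times\POp)$, where $\EOp^{\leq k}$ denotes the $k$-truncation of the extended Barratt-Eccles operad $\EOp$ of \S\ref{subsec:BarrattEccles}, and $W^{\leq k}$ is the evident $k$-truncated version of the $W$-construction, built from decorated trees with at most $k$ ingoing edges. Since $\EOp^{\leq k}\times\POp$ carries a free symmetric action arity-wise and the truncated $W$-construction is quasi-free, the operad $\ROp = \FOp^{\leq k}(\mathring{W}^{\leq k}(\EOp^{\leq k}\times\POp))$ is cofibrant by the argument of \S\ref{subsec:nonunitalWconstruction}, applying the $k$-truncated analogue of~\cite[Theorem II.8.2.]{FresseBook}; the augmentation $\ROp\xrightarrow{\sim}\EOp^{\leq k}\times\POp\xrightarrow{\sim}\POp$ then supplies the required resolution, and $\EOp^{\leq k}(0)=*$ ensures that $\tau$ applies to it as before.

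The heart of the matter is again the assertion $\tau W^{\leq k}(\EOp^{\leq k}\times\POp)\xrightarrow{\sim}\EOp^{\leq k}\times\POp$, which I would prove by reproducing Lemmas~\ref{lemm:nonunital operad structure}, \ref{lemm:nonunital W} and~\ref{lemm:W horizontal equivalence}. The non-unital structure result and the identity $W^{\leq k}((\EOp^{\leq k}\times\POp)_n) = {W'}^{\leq k}((\overline{\EOp^{\leq k}\times\POp})_n)$ only involve arity-preserving operations, so they carry over unchanged. For the horizontal equivalence I would keep the height reparameterization (Eq.~\ref{eq:height assignment}) and the contracting homotopy $h^t_x = \min(h_x,t)$: the heights remain well defined because no unit reduction occurs, and the family $\rho_t$ still interpolates between $\rho_\infty = \id$ and $\rho_0 = \eta\epsilon$. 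Passing to the diagonal of the bisimplicial object then gives the weak-equivalence in each arity $r\leq k$, hence the $k$-truncated form of Theorem~\ref{thm:Goal}, from which Theorem~\ref{thm:MainResult:kTruncatedOperads} follows formally.

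The one point demanding genuine care, and the main obstacle, is arity control in the contracting homotopy. Contracting an inner edge performs a partial composite $p_u\circ_\alpha p_v$, whose arity can a priori exceed $k$ even when both factors are admissible -- precisely the phenomenon already flagged above for treewise composites in $k$-truncated free operads. I expect to resolve it by the same device: restrict $W^{\leq k}$ to decorated trees in which the arity-zero composites have been evaluated, so that every indecomposable factor has positive arity $r_x>0$. In a tree with $r\leq k$ ingoing edges this forces every subtree, and hence every partial composite obtained by contracting edges, to have arity at most $r\leq k$. The technical work is then to check that the height homotopy preserves this normalized form and stays compatible with the $\tau$-reduction relation (Eq.~\ref{eq:reduction relation}) collapsing arity-zero subtrees, and that the contraction $h^t_x=\min(h_x,t)$ never raises the arity of an indecomposable factor; once this bookkeeping is settled, the remainder of the argument proceeds exactly as in~\S\ref{subsec:nonunitalWconstruction}.
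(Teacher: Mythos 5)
Your overall strategy coincides with the paper's: reduce via the Quillen adjunction $\tau\dashv\iota$ to the $k$-truncated analogue of Theorem~\ref{thm:Goal}, resolve $\iota\POp$ by a truncated $W$-construction $W^{\leq k}(\EOp\times\POp)$, and rerun the height-based contracting homotopy and the diagonal argument. You also correctly isolate the one genuinely new difficulty, namely that contracting an inner edge may produce a partial composite whose arity exceeds $k$. The gap is in your proposed fix. You want to build the normalization into the object, restricting $W^{\leq k}$ to decorated trees in which all arity-zero composites have been evaluated so that every indecomposable factor has arity $r_x>0$. This restriction is not tenable: the operadic composition $\omega\circ_i *$ with an arity-zero element grafts $*$ along a new edge of length $l=1$ and thereby creates an arity-zero indecomposable factor, so your restricted collection is not closed under the composition products and is not a suboperad of $W^{\leq k}(\EOp\times\POp)$ (forcing the evaluation instead would destroy the quasi-free presentation $\FOp^{\leq k}(\mathring{W}{}^{\leq k}(-))$ on which cofibrancy rests). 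Worse, a tree of arity $0$ necessarily has an indecomposable factor of arity $0$, so your condition empties the arity-zero component entirely, and the augmentation to $\EOp\times\POp$, whose arity-zero component is a point, can no longer be a weak-equivalence in arity $0$; the object would then fail to be a resolution of $\iota\POp$ in $\Op^{\leq k}$ at all.

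The paper keeps the full $W^{\leq k}$ (restricting only the number of ingoing edges of the subtrees lying within the components delimited by length-$1$ edges) and achieves the positivity condition \emph{dynamically}, by a two-step contracting homotopy rather than by a restriction of the object. In the first step one applies the height homotopy only to the maximal subtrees all of whose chains of edges abut to arity-zero elements; this absorbs the arity-zero operations and eliminates the vertices with no ingoing edge, and the partial composites formed in this step only decrease arities, so the arity bound is automatic. After this retraction every vertex of the resulting tree has $r_x>0$, which is exactly your (correct) observation that positivity of all vertices forces every partial composite to have arity at most $r\leq k$; the second step then contracts the whole tree to a corolla without leaving $W^{\leq k}$. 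If you replace your restriction of the object by this two-step deformation (which also passes to $\tau W^{\leq k}$, since the first step is compatible with the reduction relation of Eq.~\ref{eq:reduction relation}), the rest of your argument goes through as written.
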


We still reduce the proof of this statement to the verification that we have a weak-equivalence $\tau\ROp\xrightarrow{\sim}\POp$,
for a cofibrant resolution $\ROp\xrightarrow{\sim}\iota\POp$
of the object $\iota\POp$
in the category of all $k$-truncated operads $\Op^{\leq k}$.

We take $\ROp = W^{\leq k}(\EOp\times\POp)$, where $W^{\leq k}(-)$ denotes a $k$-truncated analogue of the $W$-construction,
and $\EOp\times\POp$ denotes the $k$-truncated operad such that $(\EOp\times\POp)(r) = \EOp(r)\times\POp(r)$,
for $r\leq k$, with $\EOp$ defined as in~\S\ref{subsec:BarrattEccles}.
We use the same construction as in~\S\S\ref{subsec:Wconstruction}-\ref{subsec:nonunitalWconstruction}
to define this $k$-truncated version of the $W$-construction (in both the topological setting and the simplicial setting).
We just restrict ourselves to decorated trees such that every subtree $S$ that we may form within a component
delimited by edges of length $l_e = 1$
of our decorated tree
has at most $k$ ingoing edges.
%(To check this condition, we can restrict ourselves to maximal cases, which consist of the subtrees $S$
%that we obtain by cutting the edges of length $l_e = 1$ of our decorated tree
%and the edges that abut to univalent vertices.)
%We just restrict ourselves to decorated trees such that, by cutting any subset of inner edges that contains the edges of length $l_e = 1$,
%we get a decomposition of our tree into subtrees $S$
%with at most $k$ ingoing edges each. (Equivalently, we require that every subtree that we may form within a component
%delimited by edges of length $l_e = 1$
%of our decorated tree
%has at most $k$ ingoing edges.)
In particular, we assume that the vertices of our trees $x$ have at most $k$ ingoing edges each, so that the corresponding labels $p_x\in\POp(r_x)$
satisfy our arity bound $r_x\leq k$.
Our condition also ensures that the edge contraction relations in the definition of our object (Eq. \ref{eq:edge contraction})
produce allowable composition operations in our $k$-truncated operad $\POp$,
and that $\ROp = W^{\leq k}(\EOp\times\POp)$
forms a quasi-free object
in the category of $k$-truncated operads $W^{\leq k}(\EOp\times\POp) = \FOp^{\leq k}(\mathring{W}{}^{\leq k}(\EOp\times\POp))$
with the same definition as in~\S\ref{subsec:nonunitalWconstruction}
for the generating symmetric sequence $\mathring{W}{}^{\leq k}(\EOp\times\POp)$ (our condition implies
that the treewise tensor products that form this symmetric sequence
have an arity $r\leq k$).
We still deduce from this observation that $W^{\leq k}(\EOp\times\POp)$ forms a cofibrant object in the category of $k$-truncated operads.
% Note that this $k$-truncated $W$-construction, like the free $k$-truncated operad, does not agree with the $k$-truncation
% of the usual $W$-construction on the $k$-truncation of operads.

We moreover have a canonical morphism $W^{\leq k}(\EOp\times\POp)\rightarrow\EOp\times\POp$,
which we obtain by contracting the edges of our decorated trees
and by performing the corresponding composites
in $\EOp\times\POp$. We can still prove that this morphism is a weak-equivalence, but some care is needed there,
since we have to adapt our construction in order to ensure
that our contracting homotopy produces decorated trees
that fulfill the arity bound conditions of the operad $W^{\leq k}(\EOp\times\POp)$.
We can proceed in two steps. In a first step, we can apply the contracting homotopy of the proof of Lemma~\ref{lemm:W horizontal equivalence}
to the maximal subtrees of the form
\begin{equation*}
\vcenter{\xymatrix@R=1.5em{ *+<2pt>{*}\ar@{-}[dr] & \cdots\ar@{}[d]|{\textstyle S} & *+<2pt>{*}\ar@{-}[dl] \\
& \ar@{-}[d]^{l_0} & \\
& \cdots & }}
\end{equation*}
in a decorated tree, where we now use the notation $*$ for any arity zero element
of our operad $\POp$ (we do not necessarily assume that $\POp$
belongs to the subcategory $\Op_*$ for the moment). For instance, in the case of the tree of Eq. \ref{eq:height assignment example},
we consider the subtrees:
\begin{equation*}
S_1 = \vcenter{\xymatrix@R=1.5em{ p_{x_1}\ar@{-}[d]|(0.6){l_1} & \\ & }}
\!\!\!\!\!\!\!\!\!\!\text{and}\quad S_2 = \vcenter{\xymatrix@R=1.5em{ p_{x_2}\ar@{-}[dr]|{l_2} && p_{x_3}\ar@{-}[dl]|{l_3} \\
& p_{x_4}\ar@{-}[d]|(0.6){l_4} & \\
&& }}.
\end{equation*}
This operation has the effect of reducing the composites with operations of arity zero in our object
and of eliminating the vertices with no ingoing edge.
Thus, as a result of this first contracting homotopy operation,
we get a decorated tree with $r\leq k$ ingoing edges
and of which all vertices $x\in VT$
have $r_x>0$ ingoing edges.
Then we can apply our contracting homotopy a second time, to our whole decorated tree this time, in order to abut
to a corolla, which corresponds to an element of the operad $\EOp\times\POp$
inside $W^{\leq k}(\EOp\times\POp)$.
The condition $r_x>0$ ensures that all intermediate composites which we form in this second retraction process
fulfill our arity bound and define allowable elements
of the $k$-truncated $W$-construction. Thus, we eventually conclude that our morphism $W^{\leq k}(\EOp\times\POp)\rightarrow\EOp\times\POp$
forms a homotopy equivalence arity-wise, and hence, defines a weak-equivalence
in the category of $k$-truncated operads, like the canonical projection $\EOp\times\POp\rightarrow\POp$,
so that $W^{\leq k}(\EOp\times\POp)$ forms a cofibrant resolution of our object $\POp$
in this category $\Op^{\leq k}$.

We now assume that $\POp$ is a unitary $k$-truncated operads in the category of simplicial sets, so that $\POp(0) = *$.
We can readily adapt the observation of Lemma~\ref{lemm:nonunital W} in the $k$-truncated context,
and we can also use the above two-step process
to adapt the definition
of the contracting homotopy of the proof of Lemma~\ref{lemm:W horizontal equivalence}
to the case of the unitarization of the $k$-truncated $W$-construction $\tau W^{\leq k}(\EOp\times\POp)$.
We therefore have the following statement:

\begin{thm}\label{thm:Goal:kTruncatedOperads}
We have $\tau W^{\leq k}(\EOp\times\POp)\xrightarrow{\sim}\EOp\times\POp\xrightarrow{\sim}\POp$,
for any $k$-truncated unitary operad $\POp\in\Op_*^{\leq k}$.\qed
\end{thm}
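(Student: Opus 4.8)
The plan is to follow the argument line of Theorem~\ref{thm:Goal} essentially verbatim, with the two-step contracting homotopy described in the paragraph preceding the statement substituted for the single height-deformation of Lemma~\ref{lemm:W horizontal equivalence}. First, the right-hand equivalence $\EOp\times\POp\xrightarrow{\sim}\POp$ is immediate: the projection $\EOp\to\pt$ is an arity-wise weak-equivalence (in arities $r\le k$), and the arity-wise cartesian product with a contractible operad preserves weak-equivalences, so $\EOp\times\POp\xrightarrow{\sim}\POp$ holds in $\Op_*^{\le k}$. It therefore remains to treat the left-hand map $\tau W^{\le k}(\EOp\times\POp)\to\EOp\times\POp$.

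For this I would regard $W^{\le k}(\EOp\times\POp)$ as a $k$-truncated operad in bisimplicial sets, exactly as in~\S\ref{subsec:nonunitalWconstruction}: one simplicial direction records the lengths $l_e\in\Delta^1$ attached to the inner edges of the decorated trees, and the other carries the internal simplicial grading of $\EOp\times\POp$. By the standard fact that a horizontal weak-equivalence of bisimplicial sets induces a weak-equivalence on diagonals (already invoked in the proof of Theorem~\ref{thm:Goal}), it suffices to fix the internal dimension $n\in\N$ and to prove that the augmentation $\tau W^{\le k}((\EOp\times\POp)_n)(r)\to(\EOp\times\POp)_n(r)$ is a weak-equivalence for every arity $r\le k$. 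Writing $\NOp_n=(\overline{\EOp\times\POp})_n$ and using the $k$-truncated analogue of Lemma~\ref{lemm:nonunital W}, I may replace $\tau W^{\le k}((\EOp\times\POp)_n)$ by $\tau W'{}^{\le k}(\NOp_n)$, the unitarization of the non-unital $k$-truncated $W$-construction. As in Lemma~\ref{lemm:W horizontal equivalence}, I introduce the section $\eta\colon\NOp_n(r)\to|\tau W'{}^{\le k}(\NOp_n)(r)|$ sending an operation to the corolla it labels, observe that $\epsilon\eta=\id$, and reduce the claim to the homotopy relation $\eta\epsilon\simeq\id$.

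The construction of this homotopy is the one point where the truncated case genuinely departs from Theorem~\ref{thm:Goal}, and it is the main obstacle. In the untruncated setting one simply deforms the height parameters by $h_x^t=\min(h_x,t)$ (Eq.~\ref{eq:height assignment}) and lets $t\to 0$ to collapse every decorated tree to its corolla. In the $k$-truncated setting this single deformation is illegal, because an intermediate tree produced while the edges are being contracted may acquire a vertex, or form a partial composite, of arity $>k$: a vertex whose nominal arity exceeds $k$ can sit inside a tree of total arity $\le k$ precisely when some of its inputs are eventually filled by arity-zero operations. I would resolve this exactly as the preceding paragraph prescribes, by splitting the deformation into two stages. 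In the first stage I apply the height-deformation only to the maximal subtrees $S$ all of whose upward chains abut to arity-zero elements $*\in\POp(0)$; in $\tau$ these collapse by the reduction relation (Eq.~\ref{eq:reduction relation}), so the effect is to evaluate all composites involving arity-zero operations and to delete every vertex with no ingoing edge. What remains is a decorated tree of arity $r\le k$ in which every vertex satisfies $r_x>0$.

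The positivity $r_x>0$ is what unlocks the second stage: in a tree of total arity $r$ whose vertices all have at least one ingoing edge, every connected subtree has at most $r\le k$ ingoing edges, since each such edge propagates upward to a distinct leaf of the ambient tree. Consequently every partial composite formed while contracting edges stays within the arity bound, and I may now apply the height-deformation to the whole tree, driving it to a single corolla without ever leaving $W^{\le k}(\EOp\times\POp)$. Concatenating the two stages gives a continuous family $\rho_t$ with $\rho_\infty=\id$ and $\rho_0=\eta\epsilon$, compatible with all identification relations of the cell complex; here, as in Lemma~\ref{lemm:W horizontal equivalence}, the absence of the unit reduction relation guaranteed by Lemma~\ref{lemm:nonunital operad structure} is what makes the height functions well defined. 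Reparametrising $t\mapsto t/(1-t)$ then yields the homotopy $\eta\epsilon\simeq\id$, establishing the fixed-$n$ horizontal equivalence and, passing to the diagonal, the theorem.
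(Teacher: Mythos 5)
Your proposal is correct and follows essentially the same route as the paper, which establishes this theorem by precisely the two-step contracting homotopy you describe (first collapsing the maximal subtrees whose chains all abut to arity-zero elements via the reduction relation, then contracting the resulting tree in which every vertex has $r_x>0$), combined with the $k$-truncated analogues of Lemmas~\ref{lemm:nonunital W} and~\ref{lemm:W horizontal equivalence} and the diagonal argument. Your added justification that positivity of the $r_x$ forces every connected subtree to have at most $r\leq k$ ingoing edges is a correct elaboration of a point the paper asserts without detail.
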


Recall again that this theorem gives the result of Theorem~\ref{thm:MainResult:kTruncatedOperads} when we take $\ROp = W^{\leq k}(\EOp\times\POp)$
as a cofibrant resolution of the $k$-truncated operad $\POp\in\Op_*^{\leq k}$
to compute our derived mapping spaces.
Thus, the proof of this first statement, Theorem~\ref{thm:MainResult:kTruncatedOperads}, is complete.
\qed

\bibliographystyle{plain}
\bibliography{OperadHomotopySubcategory}

\end{document}